	\definecolor{darkred}{rgb}{0.5,0,0}
	\definecolor{darkgreen}{rgb}{0,0.5,0}
	\definecolor{darkblue}{rgb}{0,0,0.7}
\newcommand{\stoptocwriting}{%
	\addtocontents{toc}{\protect\setcounter{tocdepth}{-5}}}
\newcommand{\resumetocwriting}{%
	\addtocontents{toc}{\protect\setcounter{tocdepth}{\arabic{tocdepth}}}}
\theoremstyle{plain}
\newtheorem{theorem}{Theorem}[section]
\newtheorem*{theorem*}{Theorem}
\newtheorem{proposition}[theorem]{Proposition}
\newtheorem{lemma}[theorem]{Lemma}
\newtheorem{corollary}[theorem]{Corollary}
\newtheorem*{hypothesis h}{Hypothesis H}
\newtheorem*{hypothesis h1}{Hypothesis H$_1$}
\theoremstyle{definition}
\newtheorem{example}[theorem]{Example}
\theoremstyle{remark}
\newtheorem{remark}[theorem]{Remark}
\newtheorem*{acknowledgements}{Acknowledgements}
\numberwithin{equation}{section}
\newcommand{\NN}{\mathbb{N}}
\newcommand{\ZZ}{\mathbb{Z}}
\newcommand{\QQ}{\mathbb{Q}}
\newcommand{\Zp}{\mathbb{Z}_p}
\newcommand{\Qp}{\mathbb{Q}_p}
\newcommand{\RR}{\mathbb{R}}
\newcommand{\FF}{\mathbb{F}}
\newcommand{\Fp}{\mathbb{F}_p}
\renewcommand{\AA}{\mathbb{A}}
\newcommand{\A}{\mathbf{A}}
\newcommand{\X}{\mathcal{X}}
\newcommand{\x}{\mathbf{x}}
\newcommand{\veps}{\varepsilon}
\def\gra{{\mathfrak a}} 
\def\grc{{\mathfrak c}} 
\def\gri{{\mathfrak i}} \def\grI{{\mathfrak I}}
\def\grj{{\mathfrak j}} 
\def\grk{{\mathfrak k}} 
\def\grp{{\mathfrak p}} \def\grP{{\mathfrak P}}
\def\grq{{\mathfrak q}} 
\def\grr{{\mathfrak r}}
\DeclareMathOperator{\Br}{Br}
\DeclareMathOperator{\inv}{inv}
\DeclareMathOperator{\ev}{ev}
\DeclareMathOperator{\Spec}{Spec}
\DeclareMathOperator{\valp}{v_{\it p}}
\DeclareMathOperator{\N}{N}
\DeclareMathOperator{\ClK}{Cl^+(K)}
\DeclareMathOperator{\Gal}{Gal}
\renewcommand{\(}{\left(}
\renewcommand{\)}{\right)}
\newcommand{\dmid}{\parallel}
\newcommand{\nequiv}{\not\equiv}
\title{Integral points on generalised affine Ch\^{a}telet surfaces}
\author{Vladimir Mitankin}
	\address{Max Planck Institut f\"{u}r Mathematik \\
	Vivatsgasse 7 \\
	53111 Bonn \\
	Germany
	}
	\email{vmitankin@mpim-bonn.mpg.de}
\date{\today}
\thanks{2010 {\em Mathematics Subject Classification} 14G05 (primary), 11D57, 11G35, 11R29 (secondary).} 
\begin{document}
\begin{abstract}
	We show, conditionally on Schinzel's hypothesis, that the only obstruction to the integral Hasse principle for generalised affine Ch\^{a}telet surfaces is the Brauer--Manin one.
\end{abstract}
\maketitle
\tableofcontents

\section{Introduction}
This article is dedicated to the study of the set of integral points on an integral model of the affine surfaces $X$ over $\QQ$ of the shape
\begin{equation} 
	\label{def:affine Chatelet}
	X : \quad \N_{\QQ(\sqrt{a})}(x, y) = P(t).
\end{equation}
In the current set-up $a \in \ZZ$ is non-zero squarefree, $P(t) \in \ZZ[t]$ is separable as a polynomial over $\QQ$ and $\N_{\QQ(\sqrt{a})}(x, y) = \N_{\QQ(\sqrt{a})/\QQ}(x + \omega y)$ is the norm form of $\QQ(\sqrt{a})$ for the basis $1, \omega$ over $\QQ$ with $\omega = -(1 + \sqrt{a})/2$ when $a \equiv 1 \bmod 4$ and $\omega = \sqrt{a}$ when $a \equiv 2, 3 \bmod 4$. We shall refer to $X$ as a \emph{generalised affine Ch\^{a}telet surface}. Let $\X$ be the integral model of $X$ over $\ZZ$ given by the same equation, $\X(\ZZ)$ its set of integral points and $\X(\A_\ZZ) = X(\RR) \times \prod_p \X(\Zp)$ its set of adeles. One can embed $\X(\ZZ)$ diagonally in $\X(\A_\ZZ)$ and hence a necessary condition for the existence of integral points on $\X$ is $\X (\A_{\ZZ}) \neq \emptyset$. We say that $X$ \emph{satisfies the integral Hasse principle} if this is also a sufficient condition.

Colliot-Th\'el\`ene and Xu \cite[\S1]{CTX09} found that the Brauer--Manin obstruction \cite{Man74} plays an important role when it comes to the existence of integral points on $\X$. More precisely, given a $\QQ$-variety $X$ there is a pairing between the Brauer group $\Br X$ of $X$ and the set of $\QQ$-adeles $X(\A_\QQ)$ of $X$. This pairing allowed Manin to construct a sequence of inclusions $X(\QQ) \subseteq X(\A_\QQ)^{\Br X} \subseteq X(\A_\QQ)$, where the intermediate set is known as the Brauer--Manin set. A Brauer--Manin obstruction to the existence of rational points on $X$ is then present if $X(\A_{\QQ}) \neq \emptyset$ but $X(\A_\QQ)^{\Br X} = \emptyset$. It is said to be  the only obstruction to the Hasse principle if $X(\A_\QQ)^{\Br X} \neq \emptyset$ implies $X(\QQ) \neq \emptyset$. A similar sequence of inclusions $\X(\ZZ) \subseteq \X(\A_\ZZ)^{\Br X} \subseteq \X(\A_\ZZ)$ is available. It induces a Brauer--Manin obstruction to the integral Hasse principle, an overview of which is included in section~\ref{sec:BMO}.

The surface $X$ given in \eqref{def:affine Chatelet} is closely related to a Ch\^{a}telet surface. The latter is a smooth proper model of $X$ when $\deg P(t) = 3$ or $4$. The arithmetic of a Ch\^{a}telet surface is very well understood by the work of Colliot-Th\'el\`ene, Sansuc and Swinnerton-Dyer \cite{CTSSD87a}, \cite{CTSSD87b}. They have shown that the only obstruction to the Hasse principle and weak approximation for a Ch\^{a}telet surface is the Brauer--Manin one. Under the following hypothesis this is still true for a smooth proper model of $X$ when the restriction on the degree of $P(t)$ is dropped, as the results of Colliot-Th\'el\`ene and Sansuc \cite{CTS82}, Swinnerton-Dyer \cite{SD94}, Serre \cite{Ser92}, and Colliot-Th\'el\`ene and Swinnerton-Dyer \cite{CTSD94} show. 

\begin{hypothesis h}[Schinzel's hypothesis] \label{hyp:H}
	Let $P_1(t), \dots, P_n(t) \in \ZZ[t]$ be irreducible polynomials over $\ZZ$ with positive leading coefficients and such that the set of values $\prod_{i = 1}^{n} P_i(m)$ for $m$ running in $\ZZ$ has no fixed prime divisor. Then there exist infinitely many natural numbers $m \in \NN$ such that each of $P_1(m), \dots, P_n(m)$ is a prime.
\end{hypothesis h}

Unconditional results when $P(t)$ is of arbitrary degree were attained by Browning, Matthiesen and Skorobogatov \cite{BMS14} if $P(t) \in \QQ[t]$ is a product of linear factors over $\QQ$. In this setting a variant of Hypothesis~\hyperref[hyp:H]{H} is available via recent developments in additive combinatorics.

Let $X$ be as in \eqref{def:affine Chatelet} with $P(t) = \prod_{i = 1}^n P_i(t)$, where $P_i(t) \in \ZZ[t]$ are irreducible over $\ZZ$ and pairwise coprime polynomials. As explained in section~\ref{sec:BMO} for each $i \in \{1, \dots, n\}$ the quaternion algebra $(a, P_i(t))$ belongs to $\Br X$ and thus the set $\X(\A_{\ZZ})^{\mathcal{A}}$ of adeles orthogonal to $\mathcal{A}$ contains $\X(\ZZ)$. We are now ready to state our main result.

\begin{theorem} 
\label{thm:Chatelet} 
	Let $a \in \ZZ$ be non-zero squarefree and let $P_1(t), \dots, P_n(t) \in \ZZ[t]$ be irreducible such that $P(t) = \prod_{i = 1}^n P_i(t)$ is separable as a polynomial over $\QQ$. Assume that $P_1(t)$ is linear or that the discriminant of the splitting field of $P_1(t)$ is coprime to the discriminant of $\QQ(\sqrt{a})$. Define $X$ by \eqref{def:affine Chatelet} and let $\mathcal A = \langle (a, P_1(t)), \dots, (a, P_n(t)) \rangle$ be the subgroup of $\Br X$ generated by the quaternion algebras $(a, P_i(t))$. Assume that the image of the natural projection $\X (\A_\ZZ)^{\mathcal A} \rightarrow X(\RR)$ is not bounded. Then, conditionally on Hypothesis~\hyperref[hyp:H]{H}, we have 
 	\[
 		\X (\A_\ZZ)^{\mathcal A} \neq \emptyset 
 		\implies \X(\ZZ) \neq \emptyset.
 	\]
 	Moreover, let $S$ be a finite set of finite places of $\QQ$. Then for each $\veps > 0$ and for each adelic point $(x_p, y_p, t_p) \in \X (\A_\ZZ)^{\mathcal A}$ with $(x_\infty, y_\infty, t_\infty)$ in an unbounded component of $X(\RR)$ there exists a point $(x, y, t) \in \X(\ZZ)$ such that $|t - t_p|_p < \veps$ for any $p \in S$. 
\end{theorem}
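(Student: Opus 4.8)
The plan is to run the fibration method over the affine line $\AA^1_t = \Spec \ZZ[t]$, in the spirit of Colliot-Th\'el\`ene--Sansuc--Swinnerton-Dyer but adapted to integral points following Colliot-Th\'el\`ene--Xu. Starting from the given adelic point $(x_p, y_p, t_p) \in \X(\A_\ZZ)^{\mathcal A}$, I would search for a single integer $m$, as close to $t_p$ as prescribed at the places of $S$, for which the fibre
\[
	\X_m : \quad \N_{\QQ(\sqrt a)}(x, y) = P(m)
\]
carries an integral point. Finding such an $m$ reduces the theorem to a local--global statement for the norm-one torus $T = R^1_{\QQ(\sqrt a)/\QQ} \mathbb{G}_m$, of which $\X_m$ is a torsor, and the two main inputs I expect to need are Hypothesis~\hyperref[hyp:H]{H}, to control the prime factorisation of the values $P_i(m)$, together with a strong approximation argument for integral points on $\X_m$.

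First I would fix a finite set of places $S_0 \supseteq S \cup \{\infty\}$ containing every prime dividing $2a$, the leading coefficients and discriminants of the $P_i$, and the resultants $\Res(P_i, P_j)$. After enlarging $S_0$ and, if necessary, modifying the adelic point within $\X(\A_\ZZ)^{\mathcal A}$, I may assume that the local invariants $\inv_v(a, P_i(t_v))$ vanish for every $i$ and every $v \notin S_0$. At each finite $v \in S_0$ the adelic point provides a local integral point in the fibre over $t_p$, and by continuity this persists for every $m$ lying in a suitable congruence class modulo a large power of the primes of $S_0$; thus the local integral solubility of $\X_m$ at the places of $S_0$ is guaranteed in advance. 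At the archimedean place I would invoke the hypothesis that the image of $\X(\A_\ZZ)^{\mathcal A} \to X(\RR)$ is unbounded: this lets me place the real coordinate in an unbounded component, fixing the sign of $P(m)$ compatibly with the norm form and producing real solutions of arbitrarily large size, which is exactly the non-compactness input that the strong approximation argument for $T$ will require.

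Next I would apply Hypothesis~\hyperref[hyp:H]{H} to the translates of $P_1, \dots, P_n$. Having pinned $m$ down in a congruence class at the primes of $S_0$, the no-fixed-divisor condition holds precisely because the adelic point witnesses local solubility, so Schinzel's hypothesis produces infinitely many $m$ for which each $P_i(m) = c_i \ell_i$, with $c_i$ a fixed $S_0$-unit dictated by the chosen congruences and $\ell_i$ a large prime, the $\ell_i$ distinct and outside $S_0$. The decisive point is then a reciprocity computation. Since $P_i(m)$ has no prime factor outside $S_0 \cup \{\ell_i\}$, the invariant $\inv_v(a, P_i(m))$ vanishes at every $v \notin S_0 \cup \{\ell_i\}$, so Hasse's reciprocity law
\[
	\sum_v \inv_v \big(a, P_i(m)\big) = 0
\]
reduces to $\inv_{\ell_i}(a, \ell_i) = -\sum_{v \in S_0} \inv_v(a, P_i(m))$. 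Using the agreement $\inv_v(a, P_i(m)) = \inv_v(a, P_i(t_v))$ for $v \in S_0$, the orthogonality relation $\sum_v \inv_v(a, P_i(t_v)) = 0$, and the vanishing of the invariants of the adelic point outside $S_0$, the right-hand side is zero, whence $\inv_{\ell_i}(a, \ell_i) = 0$. Hence each $\ell_i$ splits in $\QQ(\sqrt a)$, so $P(m)$ is a local norm at every place and $\X_m$ becomes everywhere locally soluble over $\ZZ$. It is here that the hypothesis on $P_1$ enters: that $P_1$ be linear, or that the discriminant of its splitting field be coprime to that of $\QQ(\sqrt a)$, guarantees that the splitting condition imposed on $\ell_1$ is independent of the congruences already fixed at $S_0$, so that the conditions at the various $\ell_i$ can be met simultaneously.

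Finally, with the real component unbounded, I would upgrade everywhere-local integral solubility to a genuine point $(x, y, t) \in \X(\ZZ)$, approximating $t_p$ at the places of $S$, by strong approximation for $T$ relative to $\infty$ (equivalently, by the vanishing of the integral Brauer--Manin obstruction for the torsor $\X_m$). I expect this last local-to-global passage to be the main obstacle. The torus $T$ is anisotropic, so strong approximation is subtle and fails without the non-compactness furnished by the unbounded real component; moreover the latent class-group obstruction to representing $P(m)$ by the \emph{principal} norm form, rather than merely as an element of the group of norms from $\QQ(\sqrt a)^\times$, must be shown to be absorbed by the freedom at $\infty$ together with the splitting of the $\ell_i$. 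Making this integral refinement precise, and checking that the invariants can indeed be cleared outside $S_0$ as claimed, is where the real work lies.
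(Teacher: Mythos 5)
Your first half is, in substance, the paper's: enlarge $S$, adjust the adelic point within $\X(\A_\ZZ)^{\mathcal A}$, use orthogonality to $\mathcal A$ together with global reciprocity to force the new prime $\ell_i$ (the paper's $q_i$) dividing $P_i(m)$ to be split in $\QQ(\sqrt a)$, and conclude that the fibre over a Schinzel value $m$ is soluble. (The paper packages this via constants $c_i$ with $(a,P_i(t_p))_p=(a,c_i)_p$ and Hasse--Minkowski applied to the conics $x_i^2-ay_i^2=c_iP_i(\lambda)$, but the reciprocity computation is the same.) The genuine gap is exactly the step you defer to the end: passing from a rational, or everywhere-locally-integral, point on the fibre $\X_m$ to an integral one. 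Strong approximation for the anisotropic norm-one torus fails in general, the fibre's Brauer group modulo constants is infinite, and the class-group obstruction you mention is real: $\N_K(x,y)=N$ can be soluble over $\QQ$ and over every $\Zp$ yet insoluble over $\ZZ$ whenever the relevant ideal classes are nontrivial in the narrow sense. Unboundedness at $\infty$ and the splitting of the $\ell_i$ do not absorb this by themselves. You correctly identify this as ``where the real work lies'' but supply no mechanism, so the proof does not close.

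The paper's resolution --- its main new idea --- is to rig the factorisation of $P(m)$ in advance so that the class-group obstruction provably vanishes. Before invoking Schinzel, one applies Chebotarev to the compositum of the splitting field $F$ of $P_1$ with the narrow Hilbert class field $H$ of $K=\QQ(\sqrt a)$ to produce primes $p_1,\dots,p_{h-1}$, split in $K$, whose prime ideals represent every nontrivial narrow ideal class and modulo which $P$ has a simple root; these are adjoined to $S$ as extra local conditions with $p_j \dmid P(t_{p_j})$. (This is also where the hypothesis on $P_1$ actually enters: linearity, or $(d_F,d_K)=1$, forces $F\cap H=\QQ$ so the required Frobenius element exists in $\Gal(FH/\QQ)$ --- not, as you suggest, to decouple the splitting condition on $\ell_1$ from the congruences at $S_0$.) Then $P(m)$ equals, up to a square, $p_1\cdots p_{h-1}$ times primes not inert in $O_K$, and Proposition~\ref{prop:rational imples integral} shows by an explicit computation in the narrow class group (swapping conjugate prime ideals above the $p_j$ until the resulting ideal lies in the trivial narrow class) that a rational solution of $\N_K(x,y)=P(m)$ forces an integral one. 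Without this preliminary insertion of the auxiliary primes and the accompanying class-group argument, your final local-to-global passage has no basis.
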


In the case $a = -1$ and $P(t) = - t^k + m$ the surfaces $X$ were already studied by Gundlach \cite{Gun13}, again conditionally on Hypothesis~\hyperref[hyp:H]{H}. However, our methods and Gundlach's ideas are different in nature. We continue with a few remarks regarding the conditions assumed in Theorem~\ref{thm:Chatelet}.

There are only finitely many points with integral coordinates inside the image of the projection $\X (\A_\ZZ)^{\mathcal A} \rightarrow X(\RR)$ when it is bounded. If none of them satisfies \eqref{def:affine Chatelet} the inclusion $\X(\ZZ) \hookrightarrow X(\RR)$ yields $\X(\ZZ) = \emptyset$ and then we say that there is an obstruction to the existence of integral points coming from the archimedean place. Berg \cite[Prop.~1.4]{B17} points out that if one takes $a = -1$ and $P(t) = -t^4 + m$ with $m = 33, 43, 67, \dots$ then there is no Brauer--Manin obstruction, however, $\X(\ZZ) = \emptyset$ because of the obstruction at $\infty$. One can produce many examples in a similar manner where the lack of integral points on $\X$ is explained via the obstruction at the real place. Thus a certain unboundedness condition is absolutely necessary. Such a condition features in Theorem~\ref{thm:Chatelet}.

If one drops the requirement that $P_1(t)$ is linear or the discriminant of its splitting field is coprime to the discriminant of $\QQ(\sqrt{a})$, then there may be extra elements of $\Br X$ that need to be taken into account. Such an example is $a = -5$ and  $P(t) = t^5 + 20t + 32$, where $\Br X/\Br \QQ \cong \ZZ/5\ZZ$ is generated by a non-cyclic algebra of order $5$ \cite[Thm~1.2.]{B17}. However, Remark~\ref{rem:class nb 1 or 2} allows us to forget this assumption when the narrow class number of $\QQ(\sqrt{a})$ is $1$ or $2$.

By \eqref{eq:extended inclusions} we see that Theorem~\ref{thm:Chatelet} implies that the Brauer--Manin obstruction is the only obstruction to the integral Hasse principle for $X$, conditionally on Hypothesis~\hyperref[hyp:H]{H}. As explained in section~\ref{sec:BMO}, when $P(t)$ is irreducible the quaternion algebra $(a, P(t))$ is a trivial element of $\Br X$ and thus $\X (\A_\ZZ)^{\mathcal A} = \X (\A_\ZZ)$. Then the statement of Theorem~\ref{thm:Chatelet} becomes equivalent to $X$ satisfying the integral Hasse principle. 

In general, it is seldom an easy task to give an explicit description of the elements of $\Br X$ which is, usually, the first non-trivial step in the analysis of $\X(\A_{\ZZ})^{\Br}$. Progress in this direction for generalised affine Ch\^{a}telet surfaces was made by Berg, who describes the quotient $\Br X / \Br \QQ$ in the case when $P(t)$ is irreducible over $\QQ$ with additional assumptions on its splitting field \cite[Thm~1.1. and Thm~1.2.]{B17}. Our method has the advantage that it requires only basic knowledge of $\Br X$. We work with the quaternion algebras $(a, P_1(t)), \dots, (a, P_n(t)) \in \Br X$. This allows us to study infinitely many surfaces which are not covered in Berg's work, for example those with $\QQ(\sqrt{a})$ not contained in the splitting field of $P(t)$. Other difficulties were pointed out by Colliot-Th\'el\`ene and Harari \cite[p.~175]{CTH16}. The projection to the $t$-coordinate endows $X$ with a conic bundle structure over the affine line. The two main problems with the currently known techniques for studying integral points according to them are that for any smooth fibre $X_t$ the quotient $\Br X_t / \Br \QQ$ is infinite and the fibres corresponding to the zeros of $P(t)$ are not split.

Our strategy exploits the structure of the narrow ideal class group of $\QQ(\sqrt{a})$ to adapt a method of Colliot-Th\'el\`ene and Sansuc \cite{CTS82} to this setting. More precisely, let $X$ be as in \eqref{def:affine Chatelet}. Under the assumption of Hypothesis~\hyperref[hyp:H]{H} we reduce the study of the existence of points on $\X$ to understanding if a suitable fibre $X_t$ over an integral $t$ with $X_t(\QQ) \neq \emptyset$ has an integral point. This is answered positively with the help of the next proposition.

\begin{proposition}
\label{prop:rational imples integral}
	Let $a$ be a non-zero squarefree integer and let $K = \QQ(\sqrt{a})$. Let $S$ be a finite set of finite primes of $\ZZ$ which are not inert in $O_K$. Let $\grp_1, \dots, \grp_{h-1}$ be prime ideals of $O_K$ above split primes $p_1, \dots, p_{h-1} \not\in S$, respectively, such that the non-trivial elements of the narrow class group of $K$ are precisely the distinct classes $[\grp_1], \dots, [\grp_{h-1}]$. Then if
	\begin{equation} 
		\label{eq:rat -> int sol}
		\N_K(x, y)
		= p_1 \dots p_{h-1} \prod_{p \in S} p
	\end{equation}
	has a solution with $x, y \in \QQ$, it also has a solution with $x, y \in \ZZ$. 
\end{proposition}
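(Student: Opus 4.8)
The plan is to translate the statement into the language of ideals and then to solve a problem about the narrow class group $\ClK$. Writing $N = p_1 \cdots p_{h-1} \prod_{p \in S} p$, a pair $(x,y) \in \ZZ^2$ with $\N_K(x,y) = N$ is exactly an element $\beta = x + \omega y \in O_K$ with $\N_{K/\QQ}(\beta) = N$, and since $N > 0$ such a $\beta$ may be taken totally positive. Hence it suffices to exhibit an integral ideal $\gra$ with $\N(\gra) = N$ whose class in $\ClK$ is trivial: a totally positive generator $\beta$ of such an $\gra$ then satisfies $\N_{K/\QQ}(\beta) = \N(\gra) = N$. The hypothesis that \eqref{eq:rat -> int sol} has a rational solution provides an element $\alpha \in K^\times$ with $\N_{K/\QQ}(\alpha) = N$; up to sign $\alpha$ is totally positive, so the principal ideal $(\alpha)$ is trivial in $\ClK$.

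Next I would describe all integral ideals of norm $N$. Since every prime $p \mid N$ is non-inert, it has a prime factor of residue degree one, so the ideals of norm $N$ are exactly the products $\prod_{p \mid N} \grr_p$ with $\grr_p$ one of the (one or two) primes above $p$. Passing from $\grr_p$ to its conjugate at a split prime changes the class by $-2[\grr_p]$, so the classes of ideals of norm $N$ form a single coset of the subgroup $H = \langle\, 2[\grr_p] : p \mid N \text{ split}\,\rangle$ of $\ClK$. Because $p_1, \dots, p_{h-1}$ are split and $[\grp_1], \dots, [\grp_{h-1}]$ run through every non-trivial element of $\ClK$, the classes $[\grr_p]$ for split $p \mid N$ generate all of $\ClK$, whence $H = 2\,\ClK$. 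A short subset-sum argument, again using that the $[\grp_i]$ exhaust $\ClK \setminus \{0\}$, shows moreover that every element of the coset is realised by some admissible choice of the $\grr_p$.

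It then remains to prove that this coset is $2\,\ClK$ itself, i.e. that some ideal of norm $N$ has class in $2\,\ClK$. Here I would reduce modulo squares, working in $\ClK / 2\,\ClK$. Conjugate primes have the same image there, so each split or ramified $p \mid N$ contributes a well-defined class $\psi(p)$, and the image of a fixed ideal $\gra_0$ of norm $N$ is $\sum_{p \mid N} \psi(p)$. Computing the image of the trivial class $[(\alpha)] = 0$, and using that $\N_{K/\QQ}(\alpha) = N$ while each prime above a split or ramified $p \mid N$ has residue degree one, gives exactly $\sum_{p \mid N} \psi(p) = 0$ in $\ClK / 2\,\ClK$. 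Thus $[\gra_0] \in 2\,\ClK = H$, the coset is trivial, and the class $0$ is attained; choosing the corresponding $\gra$ and a totally positive generator completes the proof.

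The heart of the matter, and the step I expect to be the main obstacle, is this genus-theoretic input: it is not enough to know that $N$ is a norm and that some ideal of norm $N$ is principal in the ordinary class group; one must reach the \emph{trivial} class in the narrow class group through the limited moves available, namely swapping conjugate primes above the split $p \mid N$. This forces the two separate uses of the hypotheses: that the $[\grp_i]$ enumerate all of $\ClK$ (to guarantee the moves generate $2\,\ClK$ and sweep out the whole coset), and that $N$ is a global norm (to pin the coset down to $2\,\ClK$). Keeping track of the distinction between the ordinary and the narrow class group, and of the positivity of $N$, is where the care is needed: for imaginary $K$ the narrow and ordinary groups coincide and positivity is automatic, whereas for real $K$ it is precisely the passage to a totally positive generator that secures $\N_{K/\QQ}(\beta) = +N$ rather than $-N$.
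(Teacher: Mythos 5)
Your proof is correct. It shares the paper's overall skeleton --- reduce to producing an integral ideal of norm $N = p_1\cdots p_{h-1}\prod_{p\in S}p$ whose narrow class is trivial, show that some ideal of norm $N$ has class lying in $2\ClK$, use the hypothesis that $[\grp_1],\dots,[\grp_{h-1}]$ exhaust the non-trivial classes to swap conjugate primes until the trivial class is reached, and finally take a totally positive generator so as to land on $+N$ rather than $-N$ --- but it executes the middle step genuinely differently. The paper clears denominators, sets $\gri=(m+\omega\ell)$ with $(m,\ell)=1$, and shows that the factor $\grj$ with $\grj\grj'=(k)^2$ has square class by analysing $\gcd(\gri,\gri')$ by hand: this gcd divides $(2a)$, and the ramified primes and the inert prime $2$ in the case $a\equiv 5\bmod 8$ must be treated separately. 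You instead keep the fractional ideal $(\alpha)$ intact and reduce modulo $2\ClK$, where conjugate primes above a split $p$ have the same image, each non-inert $p$ contributes $v_p(N)\psi(p)$, inert primes contribute nothing, and the relation $\sum_{p\mid N}\psi(p)=0$ falls out uniformly from $[(\alpha)]=0$ and $\N_{K/\QQ}(\alpha)=N$ with $N$ squarefree and supported on non-inert primes. This buys a cleaner, case-free derivation of the key ``square class'' fact, with no discriminant or $a\bmod 8$ analysis; the paper's version is more explicit about the shape of $\gri$, which makes its Remark~\ref{rem:class nb 1 or 2} immediate, though your computation recovers that remark just as fast since $2\ClK=0$ when $h\le 2$. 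Your closing subset-sum step is sound and is really the paper's swapping argument in disguise: a single swap, or none, already realises every element of $2\ClK$, because each non-zero element of $2\ClK$ equals $-2[\grp_j]$ for some $j$.
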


One can view Proposition~\ref{prop:rational imples integral} as a separate result, a corollary of which is that certain affine conics satisfy the integral Hasse principle. 

\stoptocwriting
\subsection{Examples}
Keep notation as in Theorem~\ref{thm:Chatelet}. We conclude the introduction with two numerical examples which demonstrate how Theorem~\ref{thm:Chatelet} with some of its assumptions relaxed still yields the existence of integral points on $\X$. 

\begin{example}
	\label{exp:narrow 1 or 2}
	Let $X \subset \AA^3$ be the affine surface defined over $\QQ$ by
	\[
		X: \quad x^2 - 2y^2 = t^5 + 6t^2 + 3t + 3.
	\]
	Eisenstein's criterion verifies that $P(t) = t^5 + 6t^2 + 3t + 3$ is irreducible and thus $\X(\A_\ZZ)^{\mathcal{A}} = \X(\A_{\ZZ})$ as explained in section~\ref{sec:BMO}. Hence it suffices to verify that $\X(\A_{\ZZ}) \neq \emptyset$. It is clear that $X(\RR) \neq \emptyset$. Hensel's lemma and the existence of non-trivial solution to $x^2 - 2y^2 \equiv 3 \bmod p$ imply that $\X(\Zp) \neq \emptyset$ for each $p \neq 2, 3$. When $p = 2$ or $3$ the solutions $(1, 1, 4) \bmod 8$ and $(1, 0, 1) \bmod 3$ to $x^2 - 2y^2 \equiv P(t) \bmod 8$ and mod $3$, respectively, and Hensel's lemma show the existence of a point in $\X(\Zp)$. Thus $\X(\A_{\ZZ}) \neq \emptyset$. The narrow class group of $\QQ(\sqrt{2})$ is trivial. Thus by Remark~\ref{rem:class nb 1 or 2} we are in position to apply Theorem~\ref{thm:Chatelet} which states, conditionally on Hypothesis~\hyperref[hyp:H]{H}, that $\X$ has an integral point. In fact, $(59, 47, -4) \in \X(\ZZ)$.
\end{example}

In the second example Theorem~\ref{thm:Chatelet} is applied without the assumption of Hypothesis~\hyperref[hyp:H]{H}.

\begin{example}
	\label{exp:unconditional}
	Let $a \equiv 1 \bmod 8$ be a positive squarefree integer such that the narrow class number of $\QQ(\sqrt{a})$ is $1$ or $2$, e.g. $a  = 17, 33, 41, 57, 73, 89, 97, \dots$ Let $X \subset \AA^3$ be the affine surface over $\QQ$ given by 
	\[
		X: \quad x^2 - xy + \frac{1 - a}{4}y^2 = \prod_{i = 1}^n (t^{2k_i} - a^{2m_i - 1}),
	\]
	where $n$, $k_i$ and $m_i$ are positive integers such that $t^{2k_i} - a^{2m_i + 1}$ are irreducible over $\ZZ$ and pairwise coprime. We claim that $\X(\A_\ZZ)^{\mathcal{A}} \neq \emptyset$. Indeed, since $a > 0$ we have $X(\RR) \neq \emptyset$ and $(a, t_\infty^{2k_i} - a^{2m_i - 1})_\infty = 1$ for any real point $(x_\infty, y_\infty, t_\infty)$. The later implies the unboundedness condition in Theorem~\ref{thm:Chatelet}. One easily sees that $\X(\Zp) \neq \emptyset$ for any $p \nmid a$ odd by a similar argument as in Example~\ref{exp:narrow 1 or 2} via Remark~\ref{rem:equivalence}. Moreover, for each such $p$ either $a \in \Fp^{* 2}$ or $\valp(t_p^{2k_i} - a^{2m_i - 1}) = 0$ for any $(x_p, y_p, t_p) \in \X(\Zp)$. Thus $(a, t_p^{2k_i} - a^{2m_i - 1})_p = 1$. If $p \mid a$ is odd, then $(1, 0, 1) \bmod p$ lifts to a $\Zp$-point by Hensel's lemma. For this point the corresponding Hilbert symbols for the quaternion algebras $(a, t^{2k_i} - a^{2m_i - 1})$ are all equal to 1. Finally, $a \equiv 1 \bmod 8$ implies that $(1 - a)/4$ is even and thus depending on the parity of $n$ one of $(1, 0, 0)$ or $(1, 2, 0) \bmod 8$ lifts to a point in $\X(\ZZ_2)$ by Hensel's lemma. Furthermore, $a \equiv 1 \bmod 8$ implies that the corresponding Hilbert symbols are again all 1 at $p = 2$. This gives a point $(x_p, y_p, t_p) \in \X(\A_\ZZ)^{\mathcal{A}}$. Let a finite set $S$ of finite primes be given which we extend as in the proof of Theorem~\ref{thm:Chatelet}. Instead of Hypothesis~\hyperref[hyp:H]{H} we can use here the fact that $\ZZ$ satisfies strong approximation away from $\infty$. Thus there is $\lambda \in \ZZ$ arbitrarily close to $t_p$ when $p \in S$ and such that
	\[
		\prod_{i = 1}^n (\lambda^{2k_i} - a^{2m_i - 1})
		= \prod_{p \in S'} p^{n_p},
	\]
	where $S'$ is some finite set containing $S$. All we need to guarantee now is that no prime $p$ of $\ZZ$ which is inert in $\QQ(\sqrt{a})$ divides $\prod_{i = 1}^n (\lambda^{2k_i} - a^{2m_i - 1})$. However, this is a straightforward corollary of the shape of each of the polynomials $t^{2k_i} - a^{2m_i - 1}$, the fact that for odd such primes $a$ is not a quadratic residue mod $p$ and $2$ is split in $\QQ(\sqrt{a})$. Following the same circle of ideas as in the proof of Theorem~\ref{thm:Chatelet} puts us in position where Proposition~\ref{prop:rational imples integral} is applicable. Hence $\X(\ZZ) \neq \emptyset$.
\end{example}

\subsection*{Structure}
This article is organised as follows. In section~2 we give an overview of the Brauer--Manin obstruction for integral points. Section~3 is dedicated to the proof of Proposition~\ref{prop:rational imples integral}. In section~4 we state and prove Proposition~\ref{prop:Chebotarev} and its Corollary~\ref{cor:Chebotarev} which are needed in the proof of Therorem~\ref{thm:Chatelet} when the narrow class number of $\QQ(\sqrt{a})$ is greater than 2. In the last section we prove Theorem~\ref{thm:Chatelet}.

\subsection*{Notation}
Throughout the rest of this article the following convention will be in force. For any number field $k$ denote its ring of integers by $O_k$ and its discriminant by $d_k$. We preserve $a$ for a fixed non-zero squarefree integer. Let $K = \QQ(\sqrt{a})$ and let $\ClK$ be the narrow class group of $K$. The symbol $[\cdot]$ denotes an element $\ClK$ and we shall write $h = \# \ClK$ for the narrow class number of $K$. For any $b \in K^{\times}$ let $(b)$ denote the principal fractional ideal $b O_K$. The subscript $i$ will usually be preserved for an irreducible factor of $P(t)$ while the subscript $j$ will usually be used for one of the primes $p_1, \dots, p_{h-1}$ appearing in Proposition~\ref{prop:rational imples integral}
\resumetocwriting

\begin{acknowledgements}
	An earlier version of this work is part of the author's PhD thesis. The author is very grateful to his supervisor Tim Browning for his constant support and guidance. The author thanks Roger Heath-Brown for suggesting the use of Proposition~\ref{prop:rational imples integral} and Martin Bright, Jean-Louis Colliot-Th\'el\`ene, Yonatan Harpaz, Daniel Loughran, Carlo Pagano, Arne Smeets, Efthymios Sofos, Jesse Thorner and Ulrich Derenthal for many useful discussions and comments. While working on this paper the author was supported by ERC grant \texttt{306457}. Part of this work was done while in residence at MSRI, the author would like to express his gratitude for their hospitality. The author is also grateful to Max Planck Institute for Mathematics in Bonn for its hospitality and financial support.
\end{acknowledgements}

\section{The Brauer--Manin obstruction}
\label{sec:BMO}
This section contains a brief overview of the Brauer--Manin obstruction for integral points, as introduced by Colliot-Th\'el\`ene and Xu \cite[\S 1]{CTX09}. Let $\X$ be a separated scheme of finite type over $\ZZ$. Let $X = \X \times_\ZZ \QQ$ and denote by $\Br X = H_{\text{\'{e}t}}^2(X, \mathbb{G}_m)$ the Brauer group of $X$. We use standard notation for the completions of $\QQ$ and $\ZZ$ with the convention that $\ZZ_\infty = \RR$.

Denote by $X(\A_\QQ)$ the set of adeles of $X$. Its elements are points $(\x_p) \in \prod_{p \le \infty} X(\Qp)$ such that, for all but finitely many primes $\x_p$ belongs to $\X(\Zp)$. Thus the set of rational points $X(\QQ)$ on $X$ can be embedded diagonally in $X(\A_\QQ)$ and we identify $X(\QQ)$ with its image under this embedding.

For each $\alpha \in \Br X$ and for each prime $p \le \infty$ class field theory yields evaluation maps $\ev_{\alpha} : X(\Qp) \rightarrow  \Br \Qp$.  Moreover, there is a group homomorphism $\inv_p : \Br \Qp \rightarrow \QQ / \ZZ$ and thus a map
\begin{equation} 
	\label{eq:ev composed with inv}
	\begin{tikzcd}
		X(\Qp) \ar[r, "\ev_{\alpha}"]  
		&\Br \Qp \ar[r, "\inv_p"] 
		&\QQ / \ZZ.
	\end{tikzcd}
\end{equation}
It gives rise to a natural pairing, the Brauer--Manin pairing, between $X (\A_\QQ)$ and $\Br X$ defined via
\begin{equation} 
	\label{def:Brauer-Manin pairing}
	\begin{split}
		X(\A_\QQ) \times \Br X 
		&\longrightarrow 
		\QQ / \ZZ, 
		\\
		\((\x_p), \alpha \) 
		&\longmapsto 
		\sum_{p \le \infty} \inv_p \( \ev_{\alpha} \( \x_p \)\).
	\end{split}
\end{equation}
The key point is that we have a commutative diagram 
\[
	\begin{tikzcd}[column sep=large]
		&X(\QQ) \arrow[hookrightarrow]{r} \arrow[d, swap, "\ev_{\alpha}"]
		&X(\A_{\QQ}) \arrow[d, "\ev_{\alpha}"] \\
		&\Br \QQ \arrow[r]
		&\underset{p \le \infty}{\bigoplus}\Br \Qp \arrow[r, "\sum_p \inv_p"]
		&\QQ / \ZZ
	\end{tikzcd}
\]
with the bottom line fitting into a short exact sequence by class field theory. Thus any rational point on $X$ lies is in the left kernel of the pairing \eqref{def:Brauer-Manin pairing}.

The Brauer--Manin obstruction for integral points can be described in the following way. Recall that $\X(\A_\ZZ) = X(\RR) \times \prod_p \X(\Zp)$. By separateness of $\X$ one can view $\X(\ZZ)$ and $\X(\A_\ZZ)$ as subsets of $X(\QQ)$ and $X(\A_\QQ)$, respectively. Therefore, there is an induced pairing
\begin{equation} 
	\label{def:integral BM pairing}
	\X(\A_\ZZ) \times \Br X
	\rightarrow \QQ / \ZZ,
\end{equation}
which vanishes on the image of $\X(\ZZ)$ in $\X(\A_\ZZ)$. For any $\alpha \in \Br X$ let $\X(\A_\ZZ)^{\alpha}$ denote the set of points in $\X(\A_\ZZ)$ orthogonal to $\alpha$ under \eqref{def:integral BM pairing}. Define the Brauer-Manin set $\X(\A_\ZZ)^{\Br X}$ as the left kernel of \eqref{def:integral BM pairing}, i.e.
\begin{equation}
	\label{def:integral Brauer-Manin set}
	\X(\A_\ZZ)^{\Br X}
	=	\bigcap_{\alpha \in \Br X} \X(\A_\ZZ)^{\alpha}.
\end{equation} 

It follows that we have inclusions
\[
	\X(\ZZ)
	\subseteq \X(\A_\ZZ)^{\Br X} 
	\subseteq	\X(\A_\ZZ).
\]
A Brauer--Manin obstruction to the integral Hasse principle for $X$ is present if the set of adeles $\X(\A_\ZZ)$ is non-empty but the Brauer--Manin set $\X(\A_\ZZ)^{\Br X}$ is empty. Alternatively, the Brauer--Manin obstruction is said to be the only obstruction to the integral Hasse principle if $\X(\A_\ZZ)^{\Br X} \neq \emptyset$ is sufficient to imply the existence of an integral point on $\X$.

Let $\mathcal{A}$ be a subgroup of $\Br X$ and let $\X(\A_\ZZ)^{\mathcal{A}}$ be the set of adeles orthogonal to the elements of $\mathcal{A}$ under \eqref{def:integral BM pairing}. Thus by \eqref{def:integral Brauer-Manin set} the Brauer--Manin set is a subset of $\X(\A_\ZZ)^{\mathcal{A}}$ and we have a further chain of inclusions 
\begin{equation}
	\label{eq:extended inclusions}
	\X(\ZZ)
	\subseteq \X(\A_\ZZ)^{\Br X} 
	\subseteq \X(\A_\ZZ)^{\mathcal{A}} 
	\subseteq \X(\A_\ZZ).
\end{equation}

In this article we are interested in $X$ defined by \eqref{def:affine Chatelet} with $P(t) = \prod_{i = 1}^{n} P_i(t) \in \ZZ[t]$ the factorisation of $P(t)$ into irreducible polynomials of $\ZZ[t]$. The quaternion algebras $(a, P_i(t))$ for $i = 1, \dots, n$ belong to $\Br X$ and we take $\mathcal{A}$ to be the subgroup of $\Br X$ generated by them.  Indeed, let us show that $(a, P_i(t)) \in \Br X$. Since $\N_{K}(x, y) = \N_{K/Q}(x + \omega y)$, then $(a, \N_{K}(x, y))$ is clearly trivial in $\Br (\QQ(X))$ and thus we have
\[
	(a, P_i(t))
	= \prod_{j \neq i} (a, P_j(t)) 
\]
in the Brauer group of the function field of $X$. On the other hand, $(a, P_i(t))$ is an Azumaya algebra on the open subvariety $U$ given by $P_i(t) \neq 0$ while $\prod_{j \neq i} (a, P_j(t))$ is Azumaya on $V$ given by $\prod_{j \neq i} P_j(t) \neq 0$. Moreover, the above verifies that these two quaternion algebras are compatible on $U \cap V$. The condition $P(t)$ separable implies that $X = U \cup V$ is smooth and thus $\Br X$ injects into $\Br (\QQ(X))$ and $(a, P_i(t))$ lies in the image of this injection.

\section{Integral points on affine conics}
\label{sec:affine conics}
In this section we prove Proposition~\ref{prop:rational imples integral}. With notation as in its statement assume that \eqref{eq:rat -> int sol} has a solution $(x, y) = \(\frac{m}{k}, \frac{\ell}{k}\)$ over $\QQ$. Therefore
\begin{equation} 
	\label{eq:rational point on Chatelet}
	\N_K(m, \ell) 
	= k^2 p_1 \dots p_{h-1} \prod_{p \in S} p.
\end{equation}
We can further assume that $(m, \ell) = 1$. Indeed, if a prime $p$ divides $(m, \ell)$, then $p^2$ divides the right hand side and since $\{p_1, \dots, p_{h-1}\} \cap S = \emptyset$, then $p \mid k$. Thus we can divide $m$, $l$ and $k$ through the common factor $p$. 

Recall that we took a basis $1, \omega$ of $K$ over $\QQ$ with $\omega  = -(1 +\sqrt{a})/2$ when $a \equiv 1 \bmod 4$ and $\omega  = \sqrt{a}$ when $a \equiv 2,3 \bmod 4$. Factoring \eqref{eq:rational point on Chatelet} into ideals of $O_K$ yields
\[
	\gri \gri'
	= (k)^2 \grp_1 \grp_1' \dots \grp_{h-1} \grp_{h-1}' \prod \grp \grp',
\]
where $'$ denotes the conjugate of an ideal in $O_K$, $\gri = (m + \omega\ell)$ and the product is taken over the corresponding prime ideals above the primes in $S$.

If a prime $\grp'$ from the product $\prod \grp \grp'$ divides $\gri$, then we swap the labels of $\grp$ and $\grp'$ so that it is $\grp$ which divides $\gri$. Let $\mathcal{S}$ denote the set of primes $\grp$ appearing in the product $\prod \grp \grp'$ which divide $\gri$. Thus there is a prime ideal $\grp \in \mathcal{S}$ above $p$ for each prime $p \in S$ and the ideals in $\mathcal{S}$ are pairwise coprime and non-conjugate. Since $\grp_1, \dots, \grp_{h-1}$ are pairwise coprime and coprime to the ideals in $\mathcal{S}$, then
\[
	\gri
	= \grj \grr_1 \dots \grr_{h-1} \prod_{\grp \in \mathcal{S}} \grp,
\]
where $\grr_j$ is either $\grp_j$ or $\grp_j'$ for all $j = 1, \dots, h-1$ and $\grj \grj'= (k)^2$.

We claim that $[\grj]$ is a square in $\ClK$. To show this we analyse the common divisors of $\gri$ and $\gri'$. If $\grq$ is a prime factor of $\gri$ and $\gri'$, then $m + \omega\ell$ and $m + \overline{\omega}\ell$ are elements of $\grq$, where $\overline{\omega}$ denotes the conjugate of $\omega$. Their sum and their difference are in $\grq$ as well, whence $a(2m - \ell), 2a\ell \in \grq$ when $a \equiv 1 \bmod 4$ and $2am, 2a\ell \in \grq$ when $a \equiv 2,3 \bmod 4$. It follows that $\grq \mid (2a)$ since $(m,\ell) = 1$. 

If $a \equiv 2,3 \bmod 4$ the discriminant $d_K$ of $K$ equals $4a$. Thus $\grq$ lies over a prime which divides $d_K$ but all such primes ramify in $O_K$. If a ramified prime divides $k$ the ideal above it must appear to an exponent which is a multiple of $4$ in the ideal prime factorisation of $(k)^2$. Since we took $\grj \grj' = (k)^2$, then a standard application of the unique prime ideal factorization theorem shows that $\grj = \grk^2$ and $\grj'= \grk'^2$ for some ideal $\grk$ of $O_K$.

If $a \equiv 1 \bmod 4$, then $d_K = a$. In this case $\grq$ either lies over an odd prime dividing $a$ and hence is ramified in $O_K$ or $\grq$ divides $2$. In the former case or if $a \equiv 1 \bmod 8$ where $2$ is split the same argument as before shows that $\grj = \grk^2$. If, however, $a \equiv 5 \bmod 8$ and $\grq \mid (2)$, then $\grq = (2)$ since $2$ is inert in $O_K$. Thus $\grq$ is a principal ideal of $O_K$ generated by an element of positive norm which implies that its class in $\ClK$ is trivial. Hence $[\grj] = [\grk]^2$. 

We have shown that the following equality in the narrow class group $\ClK$ holds:
\[
	[\gri]
	= [\grk^2 \grr_1 \dots \grr_{h-1} \prod_{\grp \in \mathcal{S}} \grp]. 
\]
The class of $\gri$ is trivial since $\N_{K/\QQ}(m + \omega\ell) > 0$ by \eqref{eq:rational point on Chatelet}. We deduce that 
\begin{equation}
	\label{eq:class eq}
	[\grr_1 \dots \grr_{h-1} \prod_{\grp \in \mathcal{S}} \grp]
	= \([\grk]^{-1}\)^2.
\end{equation}
Swapping $\grr_j$ and $\grr_j'$ for some $j \in \{1, \dots, h-1\}$ above will preserve the fact that the class on the left hand side is a square in $\ClK$. Indeed $[\grr_j \grr_j'] = [O_K]$ and so
\[
	[\grr_1 \dots \grr_{j - 1} \grr_j' \grr_{j + 1} \dots \grr_{h-1} \prod_{\grp \in \mathcal{S}} \grp]
	= \([\grk]^{-1}[\grr_j']\)^2.
\]
Therefore 
\[
	[\grp_1 \dots \grp_{h-1} \prod_{\grp \in \mathcal{S}} \grp]
	= [\grc]^2
\]
for some ideal $\grc$ of $O_K$. Since $[\grp_1], \dots, [\grp_{h-1}]$ are all of the non-trivial elements of $\ClK$, then $[\grc] = [\grp_j]$ for some $j \in \{1, \dots, h-1\}$ or $[\grc]$ is trivial. In the former case we can multiply both sides of the last equation above by $[\grc]^{-2}$. As above this is equivalent to substituting $[\grp_j]$ with $[\grp_j']$ in the left hand side. In this way we get a trivial class
\[
	[\grp_1 \dots \grp_{j - 1} \grp_j' \grp_{j + 1} \dots \grp_{h-1} \prod_{\grp \in \mathcal{S}} \grp]
	= \([\grc][\grc]^{-1}\)^2
	= [O_K].
\]
When $[\grc]$ is trivial the same conclusion is still valid if instead of $\grp'$ one has $\grp$ above. 

In both cases the assumptions of Proposition~\ref{prop:rational imples integral} imply that all prime ideals on the left hand side lie above primes of $\ZZ$ which are not inert in $O_K$. Therefore the norm of the ideal on the left hand side is
\[
	\N ( \grp_1 \dots \grp_{j - 1} \grp_j' \grp_{j + 1} \dots \grp_{h-1} \prod_{\grp \in \mathcal{S}} \grp )
	= p_1 \dots p_{h-1} \prod_{p \in S} p.
\]

We have shown that there is a principal fractional ideal $\grI$ of $O_K$ whose class in $\ClK$ is trivial and whose norm is $p_1 \dots p_{h-1} \prod_{p \in S} p$. The triviality of $[\grI]$ implies that $\grI$ has a generator $m_* + \omega\ell_*$ with positive norm. But then $\N(\grI) = \N_{K/\QQ}(m_* + \omega\ell_*)$. In other words, there are integers $m_\ast, \ell_\ast$ such that
\begin{equation}
	\label{eq:integral point}
	N_K(m_*, \ell_*)
	= p_1 \dots p_{h-1} \prod_{p \in S} p.
\end{equation}
This concludes the proof of Proposition~\ref{prop:rational imples integral}.
\qed

\begin{remark} 
	\label{rem:class nb 1 or 2}
	If $h = 1$, then the set $\{p_1, \dots, p_{h-1}\}$ is clearly empty. If $h = 2$, then the class of $\grk^2$ is trivial in $\ClK$ and hence already \eqref{eq:class eq} implies that $[\grp_1 \prod_{\grp \in \mathcal{S}} \grp]$ is trivial as well. In this case our argument proves the existence of a principal fractional ideal of $O_K$ of norm the right hand side of \eqref{eq:rat -> int sol} and hence the existence of an integral solution to \eqref{eq:rat -> int sol}. What is important is that we need no assumption on any prime appearing on the right hand side of \eqref{eq:rat -> int sol} except that it is not inert in $O_K$. In conclusion, when $h = 1, 2$ our argument works without the requirement that ``specials'' primes $p_1, \dots, p_{h-1}$ appear in \eqref{eq:rat -> int sol}.  
\end{remark}

\begin{remark}
	\label{rem:cl is proper subgroup in cl+}
	When $a > 1$ and the leading coefficient of $P(t)$ is negative we will need to show that
	\[
		\N_K(x, y) 
	= - p_1 \dots p_{h-1} \prod_{p \in S} p
	\] 
	has an integral solution provided that it has a rational one. A minor modification of the proof of Proposition~\ref{prop:rational imples integral} suffices to see this. If the ideal class group of $K$ coincides with $\ClK$, then there is an element of $O_K$ of norm $-1$ and all principal ideals of $O_K$ lie in the same class inside $\ClK$. Since $[(-1)]$ is clearly the trivial element of $\ClK$, then the same argument as above will give an integral solution satisfying \eqref{eq:integral point}. Multiplying both sides of \eqref{eq:integral point} by $-1$ and exploiting the fact that $-1$ is a norm of an element of $O_K$ gives the desired integral solution. If, however, the ideal class group of $K$ is a subgroup of $\ClK$ of order 2, then the ideal $\gri$ in the proof of Proposition~\ref{prop:rational imples integral} has a generator of negative norm and hence its class in $\ClK$ is no longer the trivial one. Applying the same steps as in the proof of Proposition~\ref{prop:rational imples integral} will now produce a principal fractional ideal $\grI$ of $O_K$ of norm $p_1 \dots p_{h-1} \prod_{p \in S} p$ and such that $[\grI] = [\gri]$. Thus $\grI$ has a generator of negative norm, $m_* + \omega\ell_*$ say. This implies that there are $m_*, \ell_* \in \ZZ$ satisfying $\N_{K/\QQ}(m_* + \omega\ell_*) = - \N(\grI)$ which shows the existence of the desired integral solution.
\end{remark}

\section{A Chebotarev density result} 
\label{sec:Chebotarev}
In this section we prove a preparatory result and its corollary needed to implement Proposition~\ref{prop:rational imples integral} in the proof of Theorem~\ref{thm:Chatelet}. Throughout this section we assume that $h > 2$. The notation fixed in the introduction is still in power. 

\begin{proposition}
	\label{prop:Chebotarev}
	Let $Q(t)$ be an irreducible polynomial of $\ZZ[t]$. Assume that $Q(t)$ is linear or that the discriminant of the splitting field of $Q(t)$ is coprime to $d_K$. Then there exist infinitely many $(h-1)$-tuples $(p_1, \dots, p_{h-1})$ of distinct primes of $\ZZ$ such that
	\begin{enumerate}[label = \emph{(\roman*)}]
		\item $Q(t) \bmod p_j$ is a product of non-repeated linear factors  for $j = 1, \dots, h-1$,
		\item each $(p_j) = \grp_j \grp_j'$ is split in $O_K$,
		\item the non-trivial elements of $\ClK$ are precisely the classes of $\grp_1, \dots, \grp_{h-1}$.
	\end{enumerate}
\end{proposition}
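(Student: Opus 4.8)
The plan is to combine class field theory for $K$ with the Chebotarev density theorem applied to a single compositum. Let $H^+$ denote the narrow Hilbert class field of $K$, i.e. the maximal abelian extension of $K$ unramified at all finite primes (ramification at the infinite places being allowed). Artin reciprocity gives a canonical isomorphism $\Gal(H^+/K) \cong \ClK$ under which, for a finite prime $\grp$ of $O_K$ unramified in $H^+$, the Frobenius $\mathrm{Frob}_\grp$ corresponds to the class $[\grp]$. Since $H^+$ is intrinsically attached to $K$ and $K/\QQ$ is Galois, $H^+/\QQ$ is Galois, and because $H^+/K$ is unramified at finite primes, the only finite primes ramifying in $H^+/\QQ$ are those dividing $d_K$. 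I also record the two facts about the $\Gal(K/\QQ)$-action that I will use: for a split prime $(p)=\grp\grp'$ one has $[\grp']=[\grp]^{-1}$ (as $\grp\grp'=(p)$ with $p>0$ is trivial in $\ClK$), and the nontrivial $\sigma\in\Gal(K/\QQ)$ acts on $\ClK$ by inversion; consequently the conjugacy class of $c\in\Gal(H^+/K)$ inside the possibly nonabelian group $\Gal(H^+/\QQ)$ is $\{c,c^{-1}\}$.

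Next I would reduce condition (i). Let $L$ be the splitting field of $Q$ over $\QQ$, with discriminant $d_L$. For all but finitely many primes $p$ (those dividing the leading coefficient of $Q$, together with those ramifying in $L$), the reduction $Q \bmod p$ is a product of non-repeated linear factors precisely when $p$ splits completely in $L$. Thus conditions (i), (ii), (iii) together amount to producing, for each nontrivial class $c\in\ClK$, a prime $p$ that splits completely in $L$ and splits in $K$ as $\grp\grp'$ with $[\grp]=c$.

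The crux is the linear disjointness $L\cap H^+=\QQ$. A number field contained in $L$ is unramified at every finite prime not dividing $d_L$, while a number field contained in $H^+$ is unramified at every finite prime not dividing $d_K$. Since the hypothesis gives $\gcd(d_L,d_K)=1$, the field $L\cap H^+$ is unramified at all finite primes, hence equals $\QQ$ by Minkowski's theorem; when $Q$ is linear this is trivial as $L=\QQ$. Setting $N=LH^+$, restriction therefore yields $\Gal(N/\QQ)\cong\Gal(L/\QQ)\times\Gal(H^+/\QQ)$. Now fix a nontrivial $c\in\ClK=\Gal(H^+/K)$ and apply Chebotarev to the conjugacy class of $(\mathrm{id}_L,c)$ in $\Gal(N/\QQ)$, which by the above is $\{\mathrm{id}_L\}\times\{c,c^{-1}\}$. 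There are infinitely many primes $p$ whose Frobenius lies in this class; any such $p$ splits completely in $L$ (condition (i)), restricts trivially to $K$ and hence splits as $\grp\grp'$ (condition (ii)), and admits a prime $\grp\mid p$ with $[\grp]\in\{c,c^{-1}\}$. Relabelling $\grp\leftrightarrow\grp'$ if necessary arranges $[\grp]=c$. Running $c$ through the $h-1$ nontrivial classes and choosing the resulting primes $p_1,\dots,p_{h-1}$ distinct (possible since each class is realised infinitely often) gives a tuple satisfying (i)–(iii), and the infinitude of tuples follows from the infinitude of available choices.

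The step I expect to be the main obstacle is the linear disjointness $L\cap H^+=\QQ$: this is exactly where the hypothesis relating $d_L$ and $d_K$ (or the linearity of $Q$) is indispensable, and it is what decouples the splitting condition in $L$ from the prescription of the ideal class in $K$. A secondary delicate point is the Frobenius bookkeeping needed to pass correctly between $\Gal(H^+/K)$, the possibly nonabelian $\Gal(H^+/\QQ)$, and the inversion action of $\sigma$ on $\ClK$, so that the single conjugacy class $\{c,c^{-1}\}$ genuinely realises the prescribed class $c$ after the choice of $\grp$ versus $\grp'$.
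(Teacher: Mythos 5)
Your proof is correct and follows essentially the same route as the paper: reduce (i) to complete splitting in the splitting field of $Q$, use the Artin isomorphism $\Gal(H^+/K)\cong\ClK$ for (ii) and (iii), establish linear disjointness of the two fields from the coprimality of discriminants, and apply Chebotarev to a suitable conjugacy class in the Galois group of the compositum. If anything, your explicit identification of the conjugacy class as $\{c,c^{-1}\}$ and the ensuing relabelling of $\grp$ and $\grp'$ is slightly more careful on this point than the paper's treatment.
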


\begin{proof}
	If $Q(t)$ is linear, then (i) is clearly satisfied for any prime not dividing the leading coefficient of $Q(t)$. Thus in this case we only need to check that there are infinitely many $(h-1)$-tuples $(p_1, \dots, p_{h-1})$ satisfying the remaining conditions (ii) and (iii). A minor modification of our method does this. 

	When $Q(t)$ in not linear we need a finer set-up.	Let $c$ be the leading coefficient of $Q(t)$ and let $\alpha$ be a root of $Q(t)$ defined over a fixed algebraic closure $\overline{\QQ}$ of $\QQ$. Let $L = \QQ(\alpha)$ and let $F \subseteq \overline{\QQ}$ be the splitting filed of $Q(t)$. By construction $F/\QQ$ is Galois. 

	Denote by $\ZZ[\frac{1}{c}]$ the localisation of $\ZZ$ with respect to the multiplicative system generated by powers of $c$. By construction it is a Dedekind domain. Let $O$ be the integral closure of $\ZZ[\frac{1}{c}]$ in $L$. By \cite[Prop.~8.1, p.~45]{Neu99} the ring $O$ is also a Dedekind domain. In the case when $Q(t)$ is monic $O$ is simply the ring of integers of $L$. 

	Let $p \in \ZZ$ be a prime not dividing $c$, the conductor of $\ZZ[\frac{1}{c}][\alpha] \subseteq O$ and the discriminant of $L$. The set of primes which fail these assumptions is finite \cite[Prop.~8.4, p.49]{Neu99}. We can now begin translating the conditions (i), (ii), (iii) into conditions on certain associated to $p$ elements of the Galois group of an appropriate number field. 

	Clearly, for any prime $p \nmid c$ of $\ZZ$ the polynomial $Q(t)$ is a product of linear factors mod $p$ if and only if $Q_1(t) = \frac{1}{c}Q(t)$ is a product of linear factors mod $p$. Moreover, by definition $\alpha \in O$ is a root of $Q_1(t)$ and $Q_1(t)$ is monic irreducible polynomial with coefficients in $\ZZ[\frac{1}{c}]$. Then $L = \QQ(\alpha)$ is a separable extension of $\QQ$ given by a primitive element $\alpha \in O$ such that $Q_1(t)$ is its minimal polynomial with coefficients in $\ZZ[\frac{1}{c}]$. By \cite[Prop.~8.3, p.~47]{Neu99} the polynomial $Q_1(t)$ is a product of distinct linear factors mod $p$ if and only if $p$ splits completely in $O$. Moreover, since $p \nmid c$ the prime $p$ is completely split in $O$ if and only if it is completely split in $O_L$ by the commutative diagram
	\[
		\begin{tikzcd}
			&\Spec{O} \arrow[hook]{r} \arrow{d}
			&\Spec{O_K} \arrow{d} \\
			&\Spec{\ZZ[\frac{1}{c}]} \arrow[hook]{r}
			&\Spec{\ZZ}
		\end{tikzcd}
		.
	\]
	If a prime splits completely in any field extension of $L$, then it must split completely in $L$ since the inertia degree and the ramification index are multiplicative in towers of field extensions. Therefore, in order to show that $Q_1(t) \bmod p$ is a product of non-repeated linear factors it suffices to check that $p$ is completely split in $F$. 
	
	The prime $p$ does not divide the discriminant of $L$ and thus it must be coprime to the discriminant of $F$ \cite[Thm~86, p.~97]{Hil98}. The latter implies that $p$ is unramified in $F$ and hence the Artin symbol is well defined on prime ideals of $O_F$ above $p$. We can understand how $p$ decomposes in $O_F$ by looking at these Artin symbols. Let $\grP \mid p$ be a prime ideal of $O_F$. The Artin map associates an element of the Galois group $\Gal(F / \QQ)$ to $\grP$ to it. The prime $p$ splits completely in $F$ if and only if this element is the identity \cite[Cor.~5.21~(iii), p.~107]{Cox89}, i.e.
	\begin{equation}
		\label{eq:F/Q}
		\(\frac{F / \QQ}{\grP}\) 
		= 1.
	\end{equation}
	In conclusion, $Q(t)$ is a product of distinct linear factors mod $p$ if and only if for any prime $\grP$ of $O_F$ above $p$ the associated Artin symbol in the Galois group $\Gal(F / \QQ)$ is the identity.
	
	Once again, by \cite[Cor.~5.21 (iii), p.~107]{Cox89} the condition that $(p) = \grp \grp'$ splits in $O_K$ is equivalent to  
	\begin{equation}
		\label{eq:K/Q}
		\(\frac{K/\QQ}{\grp}\)
		= 1.
	\end{equation}
	
	Let $H$ denote the narrow Hilbert class field of $K$. By definition it is an Abelian extension of $K$ and hence for any prime $\grP$ of $O_H$ lying over a prime $\grp$ of $K$ the Artin symbol associated to $\grP$ only depends on $\grp$. Let $\gra$ be a fixed fractional ideal of $O_K$ for which the Artin symbol is well-defined. There is an isomorphism $\Gal(H/K) \cong \ClK$ given by the Artin map \cite[Thm~8.2, p.~161]{Cox89}. It implies that $[\grp]$ and $[\gra]$ are the same element of $\ClK$ if and only if the corresponding Artin symbols satisfy  
	\begin{equation}
		\label{eq:H/K}
		\(\frac{H/K}{\grp}\)
		= \(\frac{H/K}{\gra}\).
	\end{equation}

	We need to combine all the conditions listed above. To do so, we first examine the structure of the Galois group of a certain compositum. 
	We know that $K/\QQ$ is cyclic, $H/K$ is Abelian and $H/\QQ$ is Galois. Moreover every prime of $\QQ$ that ramifies in $K$ is unramified in $H$ by construction. Thus  
	\begin{equation} 
		\label{eq:Gal(H/QQ)}
		\begin{split}	
			\Gal(H/\QQ) 
			&= \Gal(H/K) \rtimes \Gal(K/\QQ) \\
			&= \ClK \rtimes \ \ZZ/2\ZZ,
		\end{split}	
	\end{equation}	
	by \cite[Prop.~3.7]{KP18}.
	
	Let $M = FH$. Since $(d_K, d_F) = 1$ and $H$ is unramified over finite primes of $K$, then no finite primes simultaneously ramify in $F \cap H$. Thus $F \cap H = \QQ$. By \cite[Thm~1.14, p.~267]{Lan02} the extension $M / \QQ$ is Galois and its Galois group $\Gal(M/ \QQ)$ is the direct product $\Gal(F/\QQ) \times \Gal(H/\QQ)$. This together with \eqref{eq:Gal(H/QQ)} implies that
	\begin{equation}
		\label{eq:big Galois}
		\begin{split}
			\Gal(M / \QQ) 
			&= \Gal(F / \QQ) \times \Gal(H / \QQ) \\
			&= \Gal(F / \QQ) \times \( \Gal(H / K) \rtimes \Gal(K / \QQ) \).
		\end{split}
	\end{equation}
	
	In view of \eqref{eq:F/Q}, \eqref{eq:K/Q}, \eqref{eq:H/K} and \eqref{eq:big Galois} what is left is to take the element 
	\[
		\sigma 
		= \( 1,  \(\frac{H/K}{\gra}\), 1 \) \in \Gal(M/\QQ).	
	\]
	We apply Chebotarev's density theorem \cite[Thm~13.4, p.~545]{Neu99} to the conjugacy class $\left< \sigma \right>$ in $\Gal(M / \QQ)$. By doing so we conclude that there are infinitely many primes $p$ of $\ZZ$ which split completely in $F$. Moreover, they split in $K$ as $(p) = \grp \grp'$ and the classes $[\grp] = [\gra]$ coincide in $\ClK$.
	
	Now let $\gra_1, \dots, \gra_{h-1}$ be fractional ideals of $O_K$ for which the Artin symbol is well defined and such that each of them represents a different non-trivial element of $\ClK$. We apply the above argument with each of the ideals $\gra_1, \dots, \gra_{h-1}$. As a result we get infinitely many $(h-1)$-tuples $(p_1, \dots, p_{h-1})$ of distinct primes of $\ZZ$ satisfying the assumptions of Proposition~\ref{prop:Chebotarev}. This concludes its proof.
\end{proof}

The following corollary of Proposition~\ref{prop:Chebotarev} will be needed in the proof of Theorem~\ref{thm:Chatelet}.

\begin{corollary} \label{cor:Chebotarev}
	Let $P(t) \in \ZZ[t]$ be separable as a polynomial of $\QQ[t]$ with factorisation $P(t) = \prod_{i = 1}^n P_i(t)$ into irreducible polynomials $P_i(t) \in \ZZ[t]$.	Assume that $P_1(t)$ is linear or that the discriminant of the splitting field of $P_1(t)$ is coprime to $d_K$. Then there exist infinitely many $(h-1))$-tuples $(p_1, \dots, p_{h-1})$ of distinct primes of $\ZZ$ satisfying \emph{(ii)} and \emph{(iii)} of Proposition~\ref{prop:Chebotarev} and such that for each $j = 1, \dots, h-1$ there exists a $\lambda_{p_j} \in \ZZ_{p_j}$ with $p_j \dmid P(\lambda_{p_j})$.
\end{corollary}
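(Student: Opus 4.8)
The plan is to deduce the corollary directly from Proposition~\ref{prop:Chebotarev} applied to $Q(t) = P_1(t)$, exploiting the extra information encoded in its condition (i), namely that $P_1(t) \bmod p_j$ splits into \emph{distinct} linear factors. Proposition~\ref{prop:Chebotarev} already furnishes infinitely many $(h-1)$-tuples $(p_1, \dots, p_{h-1})$ of distinct primes satisfying (i), (ii) and (iii). The only new requirement is the existence, for each $j$, of a $p_j$-adic integer $\lambda_{p_j}$ with $p_j \dmid P(\lambda_{p_j})$, and the idea is that this can be arranged by restricting attention to those tuples whose entries avoid a fixed finite set of ``bad'' primes.

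First I would fix the bad set $B$ consisting of all primes dividing one of the leading coefficients of $P_1, \dots, P_n$, the discriminant of $P$, or one of the pairwise resultants $\Res(P_i, P_{i'})$ with $i \neq i'$. Since $P$ is separable over $\QQ$ these quantities are all non-zero, so $B$ is finite. As each non-trivial class of $\ClK$ is represented by infinitely many of the primes produced by Proposition~\ref{prop:Chebotarev}, I may select, class by class, representatives lying outside $B$ and distinct from one another; this still yields infinitely many $(h-1)$-tuples, all satisfying (i), (ii), (iii) and with no entry in $B$.

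Next, for a single such prime $p = p_j$ I would produce the required $\lambda_{p_j}$ as follows. By (i) the reduction $P_1 \bmod p$ has a simple root $\bar\lambda \in \FF_p$, so $P_1(\bar\lambda) \equiv 0$ and $P_1'(\bar\lambda) \not\equiv 0 \bmod p$; Hensel's lemma lifts $\bar\lambda$ to an exact root $\lambda^\ast \in \Zp$ of $P_1$ with $\lambda^\ast \equiv \bar\lambda \bmod p$. Taking $\lambda_{p_j} = \lambda^\ast$ would give $v_p(P_1(\lambda^\ast)) = \infty$, so instead I would perturb and set $\lambda_{p_j} = \lambda^\ast + p$. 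A Taylor expansion around $\lambda^\ast$ then gives $P_1(\lambda_{p_j}) = P_1'(\lambda^\ast)\,p + p^2 R$ with $R \in \Zp$, and since $P_1'(\lambda^\ast) \equiv P_1'(\bar\lambda)$ is a $p$-adic unit this forces $v_p(P_1(\lambda_{p_j})) = 1$. Because $p \notin B$ the reductions $P_1 \bmod p, \dots, P_n \bmod p$ are pairwise coprime, so the root $\bar\lambda$ of $P_1$ is a root of no $P_i$ with $i \geq 2$; as $\lambda_{p_j} \equiv \bar\lambda \bmod p$ this gives $v_p(P_i(\lambda_{p_j})) = 0$ for every $i \geq 2$. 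Summing over the factors yields $v_p(P(\lambda_{p_j})) = 1$, that is $p_j \dmid P(\lambda_{p_j})$, as required.

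I expect the main obstacle to be precisely this last exact-divisibility point. The most natural $\lambda$ coming out of condition (i) is an honest zero of $P_1$ in $\Zp$, at which $P$ vanishes to infinite order; the argument must therefore deliberately step off the zero by one unit of $p$ while simultaneously keeping all the remaining factors $P_i$ units at $\lambda_{p_j}$. Arranging both of these at once is exactly what forces the preliminary removal of the bad primes $B$, which secures the simplicity of the root of $P_1$ and the coprimality of the reductions $P_i \bmod p$; this bookkeeping is the only genuinely delicate part of the proof.
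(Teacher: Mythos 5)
Your proposal is correct and follows essentially the same route as the paper: apply Proposition~\ref{prop:Chebotarev} with $Q = P_1$, discard finitely many bad primes, use condition (i) to get a simple root of $P_1 \bmod p_j$, and perturb off that root by one unit of $p_j$ to force $v_{p_j}(P_1(\lambda_{p_j})) = 1$ while the separability (in your version, non-vanishing of the resultants) keeps the other factors $P_i$ units. The only cosmetic difference is that you Hensel-lift to an exact root and always add $p$, whereas the paper works with an integer representative $\alpha_1$ of the root and adds $p$ only when needed; both yield the same exact divisibility.
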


\begin{proof}
	We apply Proposition~\ref{prop:Chebotarev} with $Q(t) = P_1(t)$ and thus (ii) and (iii) of Proposition~\ref{prop:Chebotarev} are satisfied. For the last part it is enough to show the claim for one of the primes $p_1, \dots, p_{h-1}$. Let $p = p_1$, say. Assume that $p$ is big enough so that it is unramified in the splitting field of $P(t)$ and does not divide the leading coefficient $c$ of $P_1(t)$. Then $P(t) \bmod p$ is separable. We will first show that $P_1(t) \bmod p$ has a simple root which is equivalent to $P(t) \bmod p$ having a simple root since $P(t) \bmod p$ is separable. This is clearly the case if $P_1(t)$ is linear. Assume from now on that $P_1(t)$ is not linear.

	Let $d = \deg P_1(t)$. By (i) of Proposition\ref{prop:Chebotarev} we have
	\[
		P_1(t)
		\equiv c(t - \alpha_1) \dots (t - \alpha_d) \bmod p.
	\]
	We can assume that $\alpha_1, \dots, \alpha_d$ are the reductions mod $p$ of some elements lying in the set $\{0, \dots, p - 1\}$. The same symbols will be used for the preimages of $\alpha_1, \dots, \alpha_d$ in $\{0, \dots, p - 1\}$ where it will be clear from the context which one we mean. 
	
	The factorisation of $P_1(t) \bmod p$ implies that there exists a polynomial $R(t) \in \ZZ[t]$ such that in $\ZZ[t]$ we have the identity
	\[
		P(t)
		= (c(t - \alpha_1) \dots (t - \alpha_d) + pR(t)) P_2(t) \dots P_n(t).
	\]
	Note that none of the linear polynomials $t - \alpha_i \in \ZZ[t]$, $i = 1, \dots, d$ divides $R(t)$. Moreover, none of $t - \alpha_i$ is a factor of $P_i(t) \bmod p$ for $i = 2, \dots, n$. Indeed, the later follows from the separability of $P(t)$ mod $p$ while if $t - \alpha_i$ was a factor of $R(t)$, then $t - \alpha_i$ would have been a factor of $P_1(t)$ as a polynomial in $\ZZ[t]$. This contradicts the assumption that $P_1(t)$ is irreducible in $\ZZ[t]$. 
	
	The above implies that we can choose $\lambda_p \in \Zp$ in the following way,
	\[
		\lambda_p
		=
		\begin{cases}
			\alpha_1
			& \mbox{if } R(\alpha_1) \not\equiv 0 \bmod p, \\
			\alpha_1 + p 
			& \mbox{otherwise}.
		\end{cases}
	\]
	This concludes the proof of Corollary~\ref{cor:Chebotarev}.
\end{proof}

\section{Proof of Theorem~\ref{thm:Chatelet}}
\label{sec:main result}
Recall the statement of Hypothesis~\hyperref[hyp:H_1]{H$_1$}.

\begin{hypothesis h1} \label{hyp:H_1}
	Let $P_1(t), \dots, P_n(t) \in \QQ[t]$ be irreducible polynomials. Let $S$ be a finite set of primes of $\ZZ$ containing $\infty$, all finite primes $p$ where one of the polynomials $P_i$ does not have all of its coefficients in $\Zp$ or has all its coefficients divisible by $p$, and all finite primes less or equal to $\deg\prod_{i = 1}^{n} P_i(t)$. Given elements $\lambda_p \in \Qp$ at finite primes $p \in S$, one may find $\lambda \in \QQ$, integral away from $S$, arbitrarily close to each $\lambda_p$ for the $p$-adic topology for each finite $p \in S$ and arbitrarily big in $\RR$. Moreover, each $P_i(\lambda) \in \QQ$ is a unit in $\Qp$ for all primes $p \not\in S$ except perhaps for one prime $q_i$, where it is a uniformising parameter.
\end{hypothesis h1}

The above conjecture is usually formulated over an arbitrary number field but our application in the proof of Theorem~\ref{thm:Chatelet} only requires its version over $\QQ$. It was noticed by Serre \cite{Ser92} that Hypothesis~\hyperref[hyp:H]{H} implies Hypothesis~\hyperref[hyp:H_1]{H$_1$}. A proof of this fact is available in \cite[\S4]{CTSD94}.

We can now begin the proof of Theorem~\ref{thm:Chatelet}. Notation is as set in the introduction. 

\begin{remark}
	\label{rem:equivalence}
	We have $\N_K(x, y) = x^2 - ay^2$ when $a \equiv 2,3 \bmod 4$ while when $a \equiv 1 \bmod 4$ these two quadratic forms represent $1$ and are equivalent over $\QQ$, over $\Qp$ for all primes $p \le \infty$ and over $\Zp$ when $p \neq 2$. Hence in either case they represent the same elements of the relevant domain.
\end{remark}

Let an adelic point
\begin{equation} 
	\label{eq:adele in BM set}
	(x_p, y_p, t_p) \in \X (\A_\ZZ)^{\mathcal{A}}
\end{equation}
with $(x_\infty, y_\infty, t_\infty)$ in unbounded component of $X(\RR)$ be given. The map described in \eqref{eq:ev composed with inv} is constant on each connected component of $X(\RR)$ \cite[Prop.~8.2.9]{Poo17}. Thus without loss of generality we can assume that $(x_\infty, y_\infty, t_\infty)$ is such that for each $i = 1, \dots, n$ and $t \in \RR$ with $|t| \ge t_\infty$ the sign of $P_i(t)$ does not chance. It is convenient to write $M_p = (x_p, y_p, t_p)$ for the $p$-adic component of the adelic point above. Let $\veps > 0$ be given and let $S$ be a finite set of non-archimedean primes $p$ of $\ZZ$ for which we would like to approximate the $t$-coordinate of $M_p$ with an integral point on $\X$ of a distance at most $\veps$.  

Our argument works in the following way. In view of the inclusions $\X(\ZZ) \subseteq X(\QQ)$ and $\X(\A_{\ZZ}) \subseteq X(\A_{\QQ})$, as sets, we use Hypothesis~\hyperref[hyp:H_1]{H$_1$} to find a point $(x, y, t) \in \QQ \times \QQ \times \ZZ$ lying on $X$ with $|t - t_p|_p < \veps$ for $p \in S$. Fixing $t$ now defines an affine conic with a rational point. This puts us in the set-up of Proposition~\ref{prop:rational imples integral} which, when applied, gives an integral point on $\X$ with the same $t$ coordinate.

We first need to adjust the adele $(M_p)$ and the set $S$. Let $S \subset S_1$ be the finite extension of $S$ obtained by adding $2$, all primes dividing $a$, all primes from Hypothesis~\hyperref[hyp:H_1]{H$_1$} and $h-1$ primes $p_1, \dots, p_{h-1}$ not from the set $S$ which satisfy the assumptions of Proposition~\ref{prop:Chebotarev}.

It is clear that $\prod_{i = 1}^n P_i(t) \neq 0$ defines a dense Zariski open subvariety $U$ of $X$. For each prime $p$ the set $\X(\Zp)$ is open in $X(\Qp)$ with respect to the $p$-adic topology. Thus there exists a point $M_p' = (x_p', y_p', t_p') \in \X(\Zp) \cap U(\Qp)$ with $|t_p - t_p'|_p < \veps$. For our purposes it suffices to assume that each $M_p$ lies in $\X(\Zp) \cap U(\Qp)$. Indeed, for any element $\alpha \in \Br X$ the map \eqref{eq:ev composed with inv} is a locally constant map \cite[Prop.~8.2.9]{Poo17}. Thus $\inv_p(\ev_\alpha(M_p)) = \inv_p(\ev_\alpha(M_p'))$ and hence this small shift is compatible with \eqref{eq:adele in BM set}. 

We claim that for any of $p_1, \dots, p_{h-1}$ there exists a point $M_{p_j}' = (x_{p_j}', y_{p_j}', t_{p_j}') \in \X(\ZZ_{p_j})$ with $p_j \dmid \prod_{i = 1}^n P_i(t_{p_j}')$. By Corollary~\ref{cor:Chebotarev} there is an element $t_{p_j}' \in \ZZ_{p_j}$ such that $p_j \dmid \prod_{i = 1}^n P_i(t_{p_j}')$. Each of the primes $p_j$ is split in $O_K$ and thus $(\frac{a}{p_j}) = 1$. This implies that the congruence $x^2 - ay^2 \equiv \prod_{i = 1}^n P_i(t_{p_j}') \equiv 0 \bmod p_j$ has a non-trivial solution $(x, y)$ over $\FF_{p_j}$. Since Hensel's lemma lifts this to a $\ZZ_{p_j}$-solution Remark~\ref{rem:equivalence} shows that such $M_{p_j}'$ exists. Replace $M_p$ with $M_p'$ for each of the primes $p_1, \dots, p_{h-1}$. Since we chose $p_1, \dots, p_{h-1} \notin S$ we are not replacing any of the components of $(M_p)$ that we want to approximate. Moreover, for any $j = 1, \dots, h-1$ the equality $(\frac{a}{p_j}) = 1$ implies that the map in \eqref{eq:ev composed with inv} is zero on $\X(\ZZ_{p_j})$ for each of the quaternion algebras $(a, P_i(t))$. Therefore \eqref{eq:adele in BM set} is preserved.

The assumption \eqref{eq:adele in BM set} means, in particular, that for each $i = 1, \dots n$ the Hilbert symbols $(a, P_i(t_p))_p$ associated to the quaternion algebras $(a, P_i(t)) \in \Br X$ satisfy $(a, P_i(t_p))_p = 1$ for almost all $p$ and $\prod_{p \le \infty} (a, P_i(t_p))_p = 1$. Thus by \cite[Ch.~III, Thm~4]{Ser73} for each $i = 1, \dots, n$ there exists $c_i \in \QQ^*$ such that $(a, P_i(t_p))_p = (a, c_i)_p$. Therefore for each $i = 1, \dots, n$ and each prime $p$ we have $(a, c_iP_i(t_p))_p = 1$ and hence $c_iP_i(t_p)$ is represented by $x^2 - ay^2$ over $\Qp$. 

We have shown that the affine variety $Y \subset \AA_{\QQ}^{2n + 3}$ given by
\[
	\begin{tikzpicture}[scale = 1.5]	
		\node (Y)    at (-1.7, 0.3) {$Y:$};
		\node (W.eq) at (0.6, 0.6)  {\quad $x_i^2 - ay_i^2 = c_iP_i(t), \quad i = 1, \dots, n$,};
		\node (X.eq) at (0.03, 0) {\quad $x^2 - ay^2 = \prod_{i = 1}^n P_i(t)$};
	\end{tikzpicture}
\]
has a local point with $t = t_p$ for each prime $p$ including $\infty$. 

\begin{lemma}
	\label{lem:rational point on Y}
	There is a point in $Y(\QQ)$ with $t \in \ZZ$ satisfying $|t - t_p|_p < \veps$ when $p \in S_1 \setminus \{\infty\}$.
\end{lemma}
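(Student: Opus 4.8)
The plan is to use Hypothesis~\hyperref[hyp:H_1]{H$_1$} to choose the value of $t$ and then to verify that each of the $n+1$ affine conics obtained by fixing this $t$ is everywhere locally soluble; the Hasse--Minkowski theorem then provides a rational point on each, and since the conics share only the variable $t$, assembling these solutions produces the desired point of $Y(\QQ)$.

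First I would apply Hypothesis~\hyperref[hyp:H_1]{H$_1$} to $P_1(t), \dots, P_n(t)$, the set $S_1$, and the local values $t_p$ at the finite primes of $S_1$. This yields $\lambda \in \QQ$, integral away from $S_1$, arbitrarily close to each $t_p$ and arbitrarily large at $\infty$, such that for every $i$ the value $P_i(\lambda)$ is a unit at all primes outside $S_1$ except at a single prime $q_i$, where $v_{q_i}(P_i(\lambda)) = 1$. Taking the approximation finer than the relevant norms forces $\lambda \in \Zp$ for the finite $p \in S_1$ as well, so that $\lambda \in \ZZ$; I set $t = \lambda$.

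The heart of the matter is to show that $(a, c_i P_i(\lambda))_p = 1$ for every place $p$ and every $i$, which is exactly the condition that $x_i^2 - ay_i^2 = c_i P_i(\lambda)$ has a $\Qp$-point. For finite $p \in S_1$ this follows from the local constancy of the Hilbert symbol and the identity $(a, c_i P_i(t_p))_p = 1$ established above, once $\lambda$ is close enough to $t_p$ (here $P_i(t_p) \neq 0$ because $M_p \in U(\Qp)$). At the real place the symbol depends only on signs, and $\lambda$ can be placed far out in the unbounded component containing $(x_\infty, y_\infty, t_\infty)$, where the signs of the $P_i$ are frozen, so $(a, c_i P_i(\lambda))_\infty = (a, c_i P_i(t_\infty))_\infty = 1$. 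For a prime $p \notin S_1$ with $p \neq q_i$ both $a$ and $c_i P_i(\lambda)$ are units at the odd prime $p$, so the symbol is trivial. The only place left is $q_i$, and here I would appeal to the product formula $\prod_p (a, c_i P_i(\lambda))_p = 1$: as every other factor has already been shown to be $1$, so is the factor at $q_i$. This is the step I expect to be the crux, since it is precisely what the single uniformising prime allowed by Hypothesis~\hyperref[hyp:H_1]{H$_1$} is designed for.

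It remains to handle the final equation $x^2 - ay^2 = \prod_{i=1}^n P_i(\lambda)$. From $(a, c_i P_i(\lambda))_p = 1$ at every $p$ I obtain $(a, P_i(\lambda))_p = (a, c_i)_p$, hence $(a, \prod_i P_i(\lambda))_p = (a, c)_p$ with $c = \prod_i c_i$. The original adele lies on $X$, so $\N_K(x_p, y_p) = \prod_i P_i(t_p)$ gives $(a, \prod_i P_i(t_p))_p = 1$, that is $(a, c)_p = \prod_i (a, c_i)_p = \prod_i (a, P_i(t_p))_p = 1$ at every $p$. Thus the last conic is everywhere locally soluble as well, and Hasse--Minkowski furnishes rational solutions to all $n+1$ conics. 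Combining them gives a point of $Y(\QQ)$ with $t = \lambda \in \ZZ$ and $|t - t_p|_p < \veps$ for $p \in S_1 \setminus \{\infty\}$, as required.
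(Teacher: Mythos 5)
Your overall strategy is the paper's: apply Hypothesis~H$_1$ to pin down an integral $\lambda$, check local solubility of each conic $x_i^2-ay_i^2=c_iP_i(\lambda)$ at every place with the product formula supplying the place $q_i$, and finish with Hasse--Minkowski. (Your treatment of the last equation is actually a little more direct than the paper's, which passes through a birational map from $Y$ to $W\times C$ with $C\colon u^2-av^2=1/(c_1\cdots c_n)$; your computation $(a,\prod_iP_i(\lambda))_p=(a,c)_p=1$ is the same fact in different clothing.) But there is one genuine gap: you apply Hypothesis~H$_1$ with the set $S_1$ and then assert that for $p\notin S_1$, $p\neq q_i$, ``both $a$ and $c_iP_i(\lambda)$ are units at $p$.'' This fails at the primes $p\notin S_1$ dividing the numerator or denominator of some $c_i$. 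The auxiliary constants $c_i$ are produced by the existence theorem for elements with prescribed Hilbert symbols and there is no reason for their support to lie inside $S_1$; at such a prime $p$ one has $(a,c_iP_i(\lambda))_p=(a,c_i)_p=(a,P_i(t_p))_p$, and the Brauer--Manin condition only forces the \emph{product} over all places of $(a,P_i(t_p))_p$ to be $1$, not each local factor. So this symbol can be $-1$, in which case your product-formula step at $q_i$ and the subsequent appeal to Hasse--Minkowski both break down.

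The fix is exactly the paper's: before invoking Hypothesis~H$_1$, enlarge $S_1$ to a finite set $S_2$ containing every prime at which some $c_i$ has non-zero valuation, and run the approximation over $S_2$. Then at each such prime the local-constancy argument gives $(a,c_iP_i(\lambda))_p=(a,c_iP_i(t_p))_p=1$, while outside $S_2\cup\{q_i\}$ the unit argument is valid; since $S_1\subseteq S_2$, the approximation property claimed in the lemma is unaffected. With this one adjustment your argument goes through.
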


\begin{proof}
	Let $W \subset \AA^{2n + 1}$ be the variety defined over $\QQ$ by
	\[
		W: \quad x_i^2 - ay_i^2 = c_iP_i(t), \quad i = 1, \dots, n.
	\]
	Then $Y$ is the fibre product $Y = W \times_{\AA^{1}_{\QQ}} X$. The change of variables 
	\begin{equation}
		\label{eq:birational map}
		u + \sqrt{a}v
		= (x + \sqrt{a}y)\prod_{i = 1}^{n}\frac{1}{x_i + \sqrt{a}y_i}
	\end{equation}
	defines a birational map over $\QQ$ between $Y$ and $W \times C$, where $C \subset \AA^2_\QQ$ is the affine conic
	\[
		C: \quad u^2 - av^2 = \frac{1}{c_1 \dots c_n}.
	\]
	Moreover, this birational map sends any point on $Y$ with $\prod_{i = 1}^n P_i(t) \neq 0$ to a point on $W \times C$ with the same $t$. For each prime $p$ we now map the local point on $Y$ that we showed to exist above to a local point on $W \times C$. 

	We will show that there exists a rational point on $W \times C$ which can be mapped back to a rational point on $Y$ with $t$ coordinate less than $\veps$ away from $t_p$ for each $p \in S_1$ finite. Before doing so let $S_1 \subseteq S_2$ be the finite extension obtained by adding all primes $p \notin S_1$ for which there is a $c_i$ with non-zero $p$-adic valuation. Applying Hypothesis~\hyperref[hyp:H_1]{H$_1$} now gives $\lambda \in \QQ$ with $|\lambda| > t_\infty$ which satisfies
	\begin{itemize}
		\item $|\lambda - t_p|_p < \veps$ for each $p \in S_2 \setminus \{\infty\}$,
		\item $|\lambda|_p \le 1$ for each $p \not\in S_2$,
	\end{itemize}
	and such that
	\begin{equation}
		\label{eq:hyp h1 output}
		c_iP_i(\lambda) 
		=  \delta_i q_i \prod_{p \in S_2 \setminus \{ \infty \}} p^{l_{p, i}}, \quad i = 1, \dots, n,
	\end{equation}
	with primes $q_1, \dots, q_n \not\in S_2$, exponents $l_{p, i} \in \ZZ$ and $\delta_i \in \{\pm 1\}$. 

	The above conditions imply that, in fact, $\lambda \in \ZZ$. Indeed, without loss of generality we can assume that $\veps \le 1$. Then
		\[
			\left| \lambda \right|_p 
			\le \max \{ \left| \lambda - t_p \right|_p, \left| t_p \right|_p\}
			\le 1 \quad \mbox{for each} \quad p \in S_2 \setminus \{\infty\}.
		\]
	Thus $\lambda \in \Zp$ for each finite prime $p$ and hence it must be an integer. 

	It is now easy to show that for each $p \le \infty$ and each $i = 1, \dots, n $ the rational number $c_i P_i(\lambda)$ is represented by $x^2 - a y^2$ over $\Qp$. For each finite prime $p \in S_2$ we have $|\lambda - t_p|_p < \veps$ and hence $|c_i P_i(\lambda) - c_i P_i(t_p)|_p < \veps$. If $\veps$ is small enough, which we can freely assume, then this together with the fact that $c_iP_i(t_p)$ is represented by $x^2 - a y^2$ over $\Qp$ is enough to claim that $c_iP_i(\lambda)$ is represented as well. On the other hand, when $p \not\in S_2$ and $p \neq q_i$, then $a$ and $c_iP_i(\lambda)$ are both units in $\Zp$. Thus the Hilbert symbol $(a, c_iP_i(\lambda))_p = 1$ which is equivalent to $c_iP_i(\lambda)$ being represented by $x^2 - ay^2$ over $\Qp$. If $a > 0$, then over $\RR$ the form $x^2 - ay^2$ is indefinite and represents every real number, in particular, it represents $c_iP_i(\lambda)$. On the other hand, we already saw that $(a, c_iP_i(t_\infty))_\infty = 1$ which is only possible if $c_iP_i(t_\infty) > 0$ when $a < 0$. Recall that we chose $t_\infty$ so that the polynomials $P_i(t)$ do not chance their sign when $|t| \ge t_\infty$. Since $\lambda$ is in this range we have $c_iP_i(\lambda) > 0$ and therefore it is represented by the positive definite form $x^2 - ay^2$ over $\RR$. 

	We have shown that for each $i = 1, \dots, n$ the product of Hilbert symbols satisfies $\prod_{p \neq q_i} (a, c_i P_i(\lambda))_p = 1$. By the product formula we conclude that $(a, c_i P_i(\lambda))_{q_i} = 1$ and therefore $c_i P_i(\lambda)$ is represented by $x^2 - a y^2$ over each completion of $\QQ$. The Hasse--Minkowski theorem implies now that it must be represented over $\QQ$. In other words, we have shown that for each $i = 1, \dots, n$ there exist $x_i, y_i \in \QQ$ such that 
	\[
		x_i^2 - a y_i^2 
		= c_i P_i(\lambda) 
		= \delta_i q_i \prod_{p \in S_2 \setminus \{ \infty \}} p^{l_{p, i}}
		\neq 0.
	\]
	Thus there is a point in $W(\QQ)$ with $t$-coordinate being $\lambda$. On the other hand, $C$ has points everywhere locally and since affine conics satisfy the Hasse principle we must have $C(\QQ) \neq \emptyset$. Therefore $W \times C$ has a rational point.

	Under the birational map defined by \eqref{eq:birational map} we map the rational point we showed to exist above on $W \times C$ to a $\QQ$-point on $Y$ with the same $t$ coordinate $\lambda$. Since $S_1 \subseteq S_2$ and $|\lambda - t_p|_p < \veps$ for each $p \in S_2 \setminus \{\infty\}$ this concludes the proof.
\end{proof}

Lemma~\ref{lem:rational point on Y} implies that there is a rational point $(x, y, t)$ on the affine variety defined by $x^2 - ay^2 = P(t)$ over $\QQ$. Moreover, $t \in \ZZ$ satisfies $|t - t_p|_p < \veps$ when $p \in S_2$, where $S_2$ is as in the proof of Lemma~\ref{lem:rational point on Y}.
In view of Remark~\ref{rem:equivalence} we get $(x_*, y_*, t) \in X(\QQ)$ with the same $t$. On the other hand, Hypothesis~\hyperref[hyp:H_1]{H$_1$} allows us to freely choose the sign of $t$ so that under the assumptions in Theorem~\ref{thm:Chatelet} we can take $t$ with $P(t) > 0$. Indeed, this is true except when $a > 0$ and the leading coefficient of $P(t)$ is negative in which case there is a negative sign on the far right hand side bellow. Using \eqref{eq:hyp h1 output} we then get that 
\[
	\N_K(x_*, y_*) 
	= \prod_{i = 1}^{n} P_i(t)
	= \prod_{i = 1}^{n} \( q_i \prod_{p \in S_2 \setminus \{ \infty \}} p^{l_{p, i}'}\).
\]
The new exponents $l_{p, i}'$ differ to $l_{p, i}$ only when $p$ divides any of $c_1, \dots, c_n$. 

Recall that $2 \in S_2$. If $a = 1$, then by Lemma~\ref{lem:rational point on Y} we have $|t - t_2|_2 < \veps$ which implies that $P(t) \nequiv 2 \bmod 4$. In this case there is an elementary argument verifying that $P(t)$ is a difference of two integral squares. Indeed, if $P(t)$ is odd, then $P(t) = (\frac{P(t) + 1}{2})^2 - (\frac{P(t) - 1}{2})^2$. On the other hand, if $P(t)$ is even, then it must be divisible by $4$ and similarly we get $P(t) = (\frac{P(t) + 4}{4})^2 - (\frac{P(t) - 4}{4})^2$.

Assume now that $a \neq 1$ and we are not in the case when $a > 0$ and the leading coefficient of $P(t)$ is negative. Recall that we chose $p_1, \dots, p_{h-1}$ in $S_2$ covering all of the non-trivial elements of $\ClK$ in the sense of Proposition~\ref{prop:Chebotarev}. Moreover, we have $\valp (P(t)) = \valp (P(t_p)) = 1$ since $|t - t_p|_p < \veps$ for each $p \in \{p_1. \dots, p_{h-1} \}$. Let $S_3$ be the subset of $S_2 \setminus \{p_1, \dots, p_{h-1}, \infty \}$ consisting of all primes dividing $P(t)$ to odd multiplicity and let $S_* = S_3 \cup \{ q_1, \dots, q_n \}$. Writing $x_* = m/k$ and $y_* = \ell/k$ and redefining $k$ if needed gives
\[
	\N_K(m,\ell) 
	= k^2 p_1 \dots p_{h-1} \prod_{p \in S_*} p.
\]

For any prime $p \in S_*$ the $p$-adic valuation of $\N_K(m,\ell)$ is clearly odd. Thus if $p \in S_*$ is odd, then either $p \mid a$ or $a$ is a quadratic residue modulo $p$ because $N_K(x, y)$ is properly equivalent to $x^2 - ay^2$ over $\Fp$. Hence $p$ is not inert in $O_K$. On the other hand, if $2 \in S_*$ then $2$ is ramified if $a \equiv 2,3 \bmod 4$ and $2$ is split when $a \equiv 1 \bmod 8$. If, however, $a \equiv 5 \bmod 8$ then it is easy to see that the $2$-adic valuation of the left hand side is zero and hence $2 \not\in S_*$. Thus the primes on the right hand side above are not inert in $O_K$. This puts us in position to apply Proposition~\ref{prop:rational imples integral} which ensures that there exist $x, y \in \ZZ$ such that
\begin{equation}
	\label{eq:final integral solution}
	\N_K(x,y)
	= p_1 \dots p_{h-1} \prod_{p \in S_*} p	.
\end{equation}
Multiplying both sides above by a suitable square and redefining $x$ and $y$ if needed gives
\[
	\N_K(x,y)  
	= q_1 \dots q_n \prod_{p \in S_2 \setminus \{ \infty \}} p^{\sum_{i = 1}^n l_{p, i}'}
	= \prod_{i = 1}^n P_i(t) = P(t).
\]
Thus $\X(\ZZ) \neq \emptyset$. What is left is to recall that $|t - t_p|_p < \veps$ for each $p \in S_2 \setminus \{ \infty \}$ by Hypothesis~\hyperref[hyp:H_1]{H$_1$}. Since $S \subset S_2$, then the point $(x, y, t) \in \X(\ZZ)$ fulfils the desired approximation property. 

If $a > 1$ and the leading coefficient of $P(t)$ is negative a small modification is needed. Following the same steps as above will lead us to the situation where instead of \eqref{eq:final integral solution} we need to show that there are $x, y \in \ZZ$ such that 
\begin{equation}
	\label{eq:final negative}
	\N_K(x,y)
	= - p_1 \dots p_{h-1} \prod_{p \in S_*} p,
\end{equation}
provided that there is a rational solution to the above equation. We do so by applying Proposition~\ref{prop:rational imples integral} with Remark~\ref{rem:cl is proper subgroup in cl+} taken into account. This concludes the proof Theorem~\ref{thm:Chatelet}.
\qed

%bibliography
%\nocite{*}
\bibliographystyle{amsalpha}{}
\bibliography{bibliography/references}
\end{document}